\newtheorem{theorem}{Theorem}
\newtheorem{prop}[theorem]{Proposition}
\newtheorem{lemma}[theorem]{Lemma}
\newtheorem{corollary}{Corollary}
\theoremstyle{definition}
\newtheorem{remark}{Remark}
\newcommand{\F}{{\EuScript F}}
\newcommand{\X}{{\EuScript X}}
\newcommand{\R}{\mathbb R}
\newcommand{\C}{\mathbb C}
\newcommand{\CP}{\C\mathrm{P}}
\newcommand{\CH}{\C{\mathrm H}}
\newcommand{\RH}{\R{\mathrm H}}
\newcommand{\U}{\EuScript U}
\newcommand{\W}{\EuScript W}
\def\({\left(}
\def\){\right)}
\def\<{\left <}
\def\>{\right >}
\def\a {\alpha}
\def\l {\lambda}
\def\wt{\widetilde\omega}
\def\&{\wedge}
\newcommand{\I}{{\mathcal I}}
\newcommand{\J}{{\EuScript J}}
\newcommand{\Jhat}{\widehat{\J}}
\newcommand{\K}{{\mathcal K}}
\newcommand{\Khat}{\widehat{\K}}
\renewcommand{\H}{\mathcal H}
\newcommand{\w}{\omega}
\newcommand{\bv}{\mathbf v}
\newcommand{\bz}{\mathbf z}
\newcommand{\ri}{\mathrm i}
\renewcommand{\Re}{\operatorname{Re}}
\renewcommand{\Im}{\operatorname{Im}}
\def\intprod{\mathbin{\raisebox{.4ex}{\hbox{\vrule height .5pt width
5pt depth 0pt %
         \vrule height 3pt width .5pt depth 0pt}}}}
\begin{document}
\title{Hypersurfaces in $\CP^2$ and $\CH^2$ with two distinct principal curvatures}
\author{Thomas A. Ivey}
\address{Dept. of Mathematics, College of Charleston\\
66 George St., Charleston SC 
}
\email{IveyT@cofc.edu}
\author{Patrick J. Ryan}
\address{Department of Mathematics and Statistics\\
              McMaster University\\
              Hamilton, ON  Canada
}
\email{ryanpj@mcmaster.ca}
\date{\today}
\begin{abstract}
It is known that hypersurfaces in $\CP^n$ or $\CH^n$ for which the number $g$ of distinct principal curvatures
satisfies $g \le 2$, must belong to a standard list of Hopf hypersurfaces with constant principal curvatures, provided
that $n \ge 3$.  In this paper, we construct a 2-parameter family of non-Hopf hypersurfaces in $\CP^2$ and $\CH^2$ with $g=2$
and show that every non-Hopf hypersurface with $g=2$ is locally of this form.
\end{abstract}
\maketitle

\section{Introduction}\label{intro}

It is known that hypersurfaces in $\CP^n$ or $\CH^n$ for which the number $g$ of distinct principal curvatures
satisfies $g \le 2$ must be members of the Takagi/Montiel lists of homogeneous Hopf hypersurfaces,
provided that $n \ge 3$.  (See Theorems 4.6 and 4.7
of \cite{nrsurvey}).  In particular, they must be Hopf.  In this paper, we investigate the case $n=2$.

We first show in Theorem \ref{theorem1} that Hopf hypersurfaces
in $\CP^2$ or $\CH^2$ with $g\le 2$ must be in the Takagi/Montiel lists.
However, it turns out that there are also non-Hopf examples with $g \le 2$ and the rest of the paper will be devoted to studying them.
\begin{remark}
After the completion of this work we have learned of a preprint by D\'iaz-Ramos, Dom\'inguez-Vazquez and Vidal-Casti\~neira
\cite{DDV} where they classify hypersurfaces with two principal curvatures in $\CP^2$ and $\CH^2$ using the notion of
polar actions.
\end{remark}
In what follows, all manifolds are assumed connected and all manifolds and maps are assumed smooth ($C^\infty$)
unless stated otherwise.

The authors are grateful to the referee for reading the paper in great detail and making several valuable observations and suggestions which led to its improvement.

\section{Basic equations and results for hypersurfaces}
We follow the notation and terminology of \cite{nrsurvey}.  $M^{2n-1}$ will be a hypersurface in a complex space form, either $\CP^n$ or $\CH^n$, of constant holomorphic sectional curvature $4c = \pm 4/r^2$.  The locally defined field of unit normals is $\xi$, the structure vector field is $W = -J\xi$ and $\varphi$ is the tangential projection of the complex structure $J$.  The holomorphic distribution consisting of all tangent vectors orthogonal to $W$ is denoted by $W^\perp$ and $\varphi^2 \bv = -\bv$ for all $\bv \in W^\perp$.

The shape operator $A$ of $M$ is defined by
$$
A \bv = -\widetilde\nabla_{\bv} \xi
$$
where $\widetilde\nabla$ is the Levi-Civita connection of the ambient space and $\bv$ is any tangent vector to $M$.
(It follows that $\varphi A X = \nabla_X W$ for any tangent vector $X$.)
The eigenvalues of $A$ are the principal curvatures and the corresponding eigenvectors and eigenspaces are said to be principal vectors and principal spaces.  The  function $\<A W, W\>$ is denoted by $\a$.  If $W$ is a principal vector at all points of
$M$ (and so $AW = \a W$), we say that $M$ is a {\it Hopf hypersurface} and $\a$ is called the Hopf principal curvature.  For a Hopf hypersurface, the Hopf principal curvature is constant.  We state
the following fundamental facts (see Corollary 2.3 of \cite{nrsurvey}).
\begin{lemma}\label{fundamental}
Let $M$ be a Hopf hypersurface and let $X \in W^\perp$ be a principal vector with associated principal curvature $\lambda$.
Then
\begin{enumerate}
\item
$(\lambda - \frac{\a}{2}) A \varphi X = (\frac{\lambda \a}{2}+ c)\varphi X$;
\item
If $A \varphi X = \nu \varphi X$ for some scalar $\nu$, then $\lambda\nu = \frac{\lambda+\nu}{2}\a + c$;
\item
If $\nu = \l$ in (2), then $\nu^2 = \a\nu + c$.
\end{enumerate}
\end{lemma}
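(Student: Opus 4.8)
The plan is to reduce all three parts to a single operator identity valid for any Hopf hypersurface, namely
$$2A\varphi A = \alpha(\varphi A + A\varphi) + 2c\,\varphi,$$
and then to read off (1)--(3) by evaluating this identity on a principal vector $X \in W^\perp$. The substance of the argument is establishing the displayed identity; the specialization to $X$ is then purely algebraic.

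First I would derive the identity from the Codazzi equation, which for a hypersurface in a complex space form of constant holomorphic sectional curvature $4c$ reads
$$(\nabla_X A)Y - (\nabla_Y A)X = c\bigl(\langle X, W\rangle \varphi Y - \langle Y, W\rangle\varphi X - 2\langle \varphi X, Y\rangle W\bigr).$$
Using the Hopf condition $AW = \alpha W$ with $\alpha$ constant together with the relation $\nabla_X W = \varphi A X$ recorded in the excerpt, I differentiate $AW = \alpha W$ to get
$$(\nabla_X A)W = \nabla_X(AW) - A\nabla_X W = \alpha\varphi A X - A\varphi A X.$$
Setting $Y = W$ in Codazzi and using $\varphi W = 0$, $\langle W,W\rangle = 1$, and $\langle \varphi X, W\rangle = 0$ (the last from skew-adjointness of $\varphi$), the right-hand side collapses to $-c\,\varphi X$, so that
$$(\nabla_W A)X = \alpha\varphi A X - A\varphi A X + c\,\varphi X.$$

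The key step is to exploit the self-adjointness of the operator $\nabla_W A$. Since $A$ is self-adjoint and $\varphi$ is skew-adjoint, the operator $A\varphi A$ is skew-adjoint, $\varphi$ is skew-adjoint, and the adjoint of $\varphi A$ is $-A\varphi$. Writing out $\langle (\nabla_W A)X, Y\rangle = \langle X, (\nabla_W A)Y\rangle$ and comparing the two sides term by term therefore forces the symmetric part of the right-hand side operator to vanish, which is precisely the displayed identity $2A\varphi A = \alpha(\varphi A + A\varphi) + 2c\,\varphi$. I expect this symmetrization to be the only genuinely delicate point, since one must keep careful track of the (anti)symmetry of each of the three operators $A\varphi A$, $\varphi A$, and $\varphi$.

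Finally I would specialize. Applying the identity to $X \in W^\perp$ with $AX = \lambda X$ and using $\varphi A X = \lambda\varphi X$ gives $2\lambda A\varphi X = \alpha(A\varphi X + \lambda\varphi X) + 2c\,\varphi X$, which rearranges to $(\lambda - \tfrac{\alpha}{2})A\varphi X = (\tfrac{\lambda\alpha}{2} + c)\varphi X$, proving (1). For (2), substitute $A\varphi X = \nu\varphi X$ and cancel $\varphi X$, which is legitimate because $\varphi^2 X = -X$ forces $\varphi X \neq 0$; this yields $\lambda\nu = \tfrac{\lambda+\nu}{2}\alpha + c$. Part (3) is then immediate on setting $\nu = \lambda$.
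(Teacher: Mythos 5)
Your proof is correct, and it is essentially the canonical argument: the paper itself gives no proof of this lemma but simply cites Corollary 2.3 of the Niebergall--Ryan survey, where the result is obtained exactly as you do it, by deriving the operator identity $A\varphi A = \tfrac{\alpha}{2}(A\varphi + \varphi A) + c\varphi$ from the Codazzi equation with $Y=W$ together with the self-adjointness of $\nabla_W A$, and then evaluating on a principal $X\in W^\perp$. One small terminological slip: since $\nabla_W A$ is self-adjoint while $A\varphi A$, $\varphi$, and $\varphi A + A\varphi$ are all skew-adjoint, it is the \emph{skew-adjoint} part of the operator $\alpha\varphi A - A\varphi A + c\varphi$ that is forced to vanish (not the symmetric part); the identity you actually write down is precisely that vanishing condition, so the argument stands.
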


\subsection{Takagi's list and Montiel's list}

There is a distinguished class of model hypersurfaces, which we list below.  We use the established
nomenclature (types $A, B, C, D, E$ with subdivisions $A_0$, $A_1$, etc.) due to Takagi \cite {takagi}
and Montiel \cite{montiel}.
These lists consist precisely of the complete Hopf hypersurfaces with constant principal
curvatures in their respective ambient spaces as determined by Kimura \cite{kimura} and Berndt
\cite{Berndt}.
Equivalently, it is the list of homogeneous Hopf hypersurfaces, a fact which follows from the work of
Takagi \cite{takagi0} and Berndt \cite{Berndt}.  Non-Hopf homogeneous hypersurfaces exist in $\CH^n$ but not in $\CP^n$.

{\bf Takagi's list for $\CP^n$}
\begin{itemize}
\item {($A_1$)} Geodesic spheres (which are also tubes over totally geodesic complex projective spaces $\CP^{n-1}$).
\item {($A_2$)} Tubes over totally geodesic complex projective spaces
$\CP^k , 1 \le k \le n-2$.
\item {($B$)} Tubes over complex quadrics (which are also tubes over totally geodesic real projective spaces $\R P^n$).
\item {($C$)} Tubes over the Segre embedding of $\C P^1 \times \C P^m$
where $2m+1=n$ and $n \ge 5$.
\item {($D$)} Tubes over the Pl\"ucker embedding of the complex Grassmann
manifold $G_{2, 5}$ (which occur only for $n = 9$).
\item {($E$)} Tubes over the canonical embedding of the Hermitian
symmetric space $SO(10)/U(5)$ (which occur only for $n = 15$).
\end{itemize}
Note that only types $A_1$ and $B$ can occur when $n=2$.

{\bf Montiel's list for $\CH^n$}

\begin{itemize}

\item {($A_0$)} Horospheres.
\item {($A_1$)} Geodesic spheres and tubes over totally geodesic complex
hyperbolic spaces $\CH^{n-1}$.
\item {($A_2$)} Tubes over totally geodesic complex hyperbolic spaces $\CH^k , 1 \le k \le n-2$.
\item {($B$)} Tubes over totally geodesic real hyperbolic spaces $\RH^n$.

\end{itemize}
Note that Type $A_2$ cannot occur when $n=2$.

\section {The Hopf Case}
\begin{theorem}\label{theorem1}

Let $M^3$ be a Hopf hypersurface in $\CP^2$ or $\CH^2$ with $g \le 2$ distinct principal
curvatures at each point.
Then $M$ is an open subset of a hypersurface in the lists of Takagi and Montiel.
\end{theorem}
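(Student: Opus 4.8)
The plan is to exploit the Hopf condition to split the analysis according to the eigenstructure of the shape operator on the holomorphic distribution $W^\perp$, to prove that all principal curvatures are constant, and then to invoke the classification of Hopf hypersurfaces with constant principal curvatures. First I would record the basic splitting: since $M$ is Hopf, $AW = \a W$ with $\a$ constant, and because $A$ is self-adjoint the two-dimensional distribution $W^\perp$ is $A$-invariant. Thus $A|_{W^\perp}$ is a self-adjoint operator on a plane, with either one or two distinct eigenvalues at each point. Since $\varphi$ is skew-adjoint, preserves $W^\perp$, and satisfies $\varphi^2 = -I$ there, for any unit $X \in W^\perp$ the vectors $X$ and $\varphi X$ form an orthonormal basis of $W^\perp$.

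The core of the argument is a pointwise dichotomy driven by Lemma \ref{fundamental}. If $A|_{W^\perp} = \lambda I$ at a point, then $\varphi X$ is automatically a $\lambda$-eigenvector, so part (2) of the lemma with $\nu = \lambda$, equivalently part (3), forces $\lambda^2 = \a\lambda + c$. If instead $A|_{W^\perp}$ has two distinct eigenvalues, then since $g \le 2$ the value $\a$ must coincide with one of them; letting $\lambda$ denote the other eigenvalue and $X$ its unit eigenvector, part (1) applies. When $\lambda \ne \a/2$ it shows $\varphi X$ is principal with eigenvalue $\nu = (\lambda\a + 2c)/(2\lambda - \a)$; since $\{X,\varphi X\}$ spans $W^\perp$, having $\nu = \lambda$ would render $W^\perp$ umbilic (a contradiction), so $\nu = \a$, and solving this yields $\a \ne 0$ together with $\lambda = \a + 2c/\a$. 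When $\lambda = \a/2$, part (1) forces $\a^2 + 4c = 0$, which again pins down $\lambda = \a/2$. In every case each eigenvalue of $A|_{W^\perp}$ lies in the finite set determined by the constants $\a$ and $c$, namely the roots of $t^2 - \a t - c$ together with $\a + 2c/\a$ and $\a/2$.

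Finally I would upgrade this pointwise information to global constancy. The principal curvatures are continuous functions on the connected manifold $M$ taking values in a finite set, hence are constant; the single-eigenvalue case yields the type $A$ relation $\lambda^2 = \a\lambda + c$, while the two-eigenvalue case yields the type $B$ configuration in which $\varphi$ interchanges the $\a$-eigenspace and the $\lambda$-eigenspace. With $\a$ and the eigenvalues of $A|_{W^\perp}$ all constant, $M$ is a Hopf hypersurface with constant principal curvatures, and the classification of Kimura \cite{kimura} and Berndt \cite{Berndt} then identifies $M$ as an open subset of a model on the lists of Takagi and Montiel. I expect the main obstacle to be the bookkeeping in the two-distinct-eigenvalue case, in particular ruling out $\nu = \lambda$, correctly handling the degenerate subcases $\a = 0$ and $\lambda = \a/2$, and verifying that the finite-candidate-set argument remains valid uniformly across points where the multiplicities of the eigenvalues of $A|_{W^\perp}$ change.
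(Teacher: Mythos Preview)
Your proposal is correct and follows essentially the same route as the paper: split according to whether $\alpha$ has multiplicity one on $M$ (your case where $A|_{W^\perp}$ has a single eigenvalue) or multiplicity two (your two-eigenvalue case, where one eigenvalue must equal $\alpha$), use Lemma~\ref{fundamental} to pin down the remaining curvature in each case, deduce constancy, and invoke Kimura--Berndt. The bookkeeping worries you flag at the end do not materialize, and your finite-candidate-set continuity argument is equivalent to the paper's open-and-closed argument that the multiplicity of $\alpha$ is globally constant.
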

\begin{proof}
It is well-known (see Theorem 1.5 of \cite{nrsurvey}) that umbilic hypersurfaces
cannot occur in $\CP^n$ or $\CH^n$.  In fact, Hopf hypersurfaces cannot have umbilic points, since
by Lemma \ref{fundamental} the Hopf principal curvature $\a$ would have to satisfy
$\a^2 = \a^2 + c$ at such points.
Thus, when $n=2$ the multiplicity of $\a$ as a principal curvature is either 1 or 2 at each point $p\in M$, and by continuity
the multiplicity will be the same on an open set around $p$.  Hence the set of points
where $\a$ has multiplicity 2, and the set of points where $\a$ has multiplicity 1
(which coincides with the set of points where the holomorphic subspace $W^\perp$ is principal),
are both open and closed in $M$.  So, one set is empty and the other is all of $M$.

If $\a$ has multiplicity 2 on $M$, Lemma \ref{fundamental} shows that
$\a\nu = \a^2 + 2c$ where $\nu$ is the other principal curvature.  Thus, if $\a^2 + 2c \ne 0$ then $\nu$ must be a nonzero constant,
 while if $\a^2 + 2c =0$ then $\nu$ must be identically zero.
 The classification of Hopf hypersurfaces with constant principal curvatures by Kimura \cite{kimura} and Berndt \cite{Berndt}
 implies that $M$ is an open subset of a hypersurface in the Takagi/Montiel lists.
In fact, $M$ must be a Type $B$ hypersurface in $\CH^2$ (a tube around $\RH^2$) of radius $ru$ with $\coth u =  \sqrt 3$.

The other possibility is that $\a$ has multiplicity 1 on $M$.
Then the other principal curvature satisfies $\nu^2 = \a \nu + c$ and so is constant.
Again, $M$ must be an open subset of a hypersurface in the Takagi/Montiel list, in this case Type $A_0$ (a horosphere in $\CH^2$) or Type $A_1$ (a geodesic sphere in $\CP^2$ or $\CH^2$, or a tube over a totally geodesic $\CH^1$ in $\CH^2$).
\end{proof}

\section{The non-Hopf case}\label{2pc}
Consider now a hypersurface $M$ in the ambient space $\X$ (either $\CP^2$ or $\CH^2$).
If $M$ is not Hopf, then $AW \ne \alpha W$ on a nonempty open subset of $M$, and we can construct the
{\em standard frame} $(W,X,Y)$ as follows.  First, choose the unit vector field $X$
so that $A W= \alpha W + \beta X$ for a positive function $\beta$; then let $Y=\varphi X$.
Then $A$ is represented with respect to this frame by a matrix
\begin{equation}\label{shapematrix}
\begin{pmatrix} \alpha & \beta & 0 \\ \beta & \lambda & \mu \\ 0 & \mu & \nu\end{pmatrix}
\end{equation}
where $\lambda, \mu, \nu$ are also smooth functions.

\begin{prop}\label{algprop}
Let $M^3$ be a hypersurface in $\CP^2$ or $\CH^2$ and suppose that $AW \ne \alpha W$ on $M$.  Then
there are  $g \le 2$ distinct principal curvatures at each point if and only if
$\mu=0$ and
\begin{equation}\label{nucond}
\nu^2-(\alpha+\lambda)\nu + (\lambda\alpha - \beta^2) = 0.
\end{equation}
\end{prop}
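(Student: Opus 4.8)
The plan is to treat this as a purely linear-algebraic statement about the symmetric matrix \eqref{shapematrix}: at each point the principal curvatures are exactly the eigenvalues of this matrix, so $g \le 2$ is equivalent to the matrix having a repeated eigenvalue. Since the matrix is real symmetric, algebraic and geometric multiplicities coincide, and the whole problem reduces to counting the distinct roots of the characteristic polynomial $p(t) = \det(tI - A)$. The positivity of $\beta$, which is built into the standard frame, will do the decisive work in both directions.

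First I would dispose of the easy implication. Setting $\mu = 0$ makes \eqref{shapematrix} block-diagonal, with a $2\times2$ block $\begin{pmatrix}\alpha & \beta\\ \beta & \lambda\end{pmatrix}$ acting on $\operatorname{span}\{W,X\}$ and the scalar $\nu$ acting on $\operatorname{span}\{Y\}$. The $2\times2$ block has characteristic polynomial $s^2 - (\alpha+\lambda)s + (\alpha\lambda - \beta^2)$, whose discriminant $(\alpha-\lambda)^2 + 4\beta^2$ is strictly positive because $\beta > 0$; hence its eigenvalues $\rho_1 \ne \rho_2$ are always distinct. Condition \eqref{nucond} says precisely that $\nu$ is a root of this quadratic, i.e.\ $\nu \in \{\rho_1,\rho_2\}$. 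In that case the three eigenvalues $\rho_1,\rho_2,\nu$ take only two distinct values, so $g = 2 \le 2$.

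For the converse I would argue that $g \le 2$ forces a repeated eigenvalue $\rho$ whose eigenspace, by symmetry of $A$, has dimension at least $2$; equivalently $\operatorname{rank}(A - \rho I) \le 1$, so every $2\times2$ minor of $A - \rho I$ vanishes. The key step is to pick the minor formed from rows $1,2$ and columns $2,3$, namely $\det\begin{pmatrix}\beta & 0\\ \lambda - \rho & \mu\end{pmatrix} = \beta\mu$. Its vanishing together with $\beta > 0$ yields $\mu = 0$ at once. Once $\mu = 0$ is established, the previous paragraph applies in reverse: the eigenvalues are the distinct pair $\rho_1,\rho_2$ from the block together with $\nu$, and $g \le 2$ can hold only if $\nu$ coincides with one of $\rho_1,\rho_2$, which is exactly \eqref{nucond}.

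The only genuine subtlety, and the step I would be most careful about, is this backward direction: one must invoke the symmetry of $A$ to pass from a repeated eigenvalue (algebraic multiplicity $\ge 2$) to an actual drop in rank (geometric multiplicity $\ge 2$), since the minor argument needs $\operatorname{rank}(A - \rho I)\le 1$ rather than merely a repeated root of $p$. I would also note in passing that the umbilic case $g=1$ never arises, as it would force $A = \rho I$ and hence $\beta = 0$; the conclusion is therefore always $g = 2$. Everything else is a short computation.
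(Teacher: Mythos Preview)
Your argument is correct and is a genuinely different route from the paper's. The paper does not work in the pre-existing standard frame at all; instead it builds a new frame geometrically by first choosing $\widetilde Y$ to be a unit vector in the intersection of $W^\perp$ with the two-dimensional principal space (nonempty by a dimension count), setting $\widetilde X = -\varphi\widetilde Y$, and then observing that since $\widetilde Y$ is an eigenvector the span of $\{W,\widetilde X\}$ is $A$-invariant, so the matrix in the $(W,\widetilde X,\widetilde Y)$ basis already has $\mu=0$; finally one checks that this frame coincides (up to a sign on $\widetilde Y$) with the standard frame. Your approach is more elementary, being pure linear algebra with no use of $\varphi$: the vanishing of the off-diagonal $2\times2$ minor $\beta\mu$ in $A-\rho I$ is a nice trick that extracts $\mu=0$ directly from $\operatorname{rank}(A-\rho I)\le1$, and you correctly flag that symmetry of $A$ is what lets you pass from a repeated root to a rank drop. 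The paper's geometric construction, on the other hand, makes it immediately transparent that $Y$ itself is a principal vector and that $\nu$ is the repeated principal curvature, information that is used downstream (e.g., in the parametrization by $\tau$ in Section~\ref{edssect}); in your version this is recovered only after the fact from the block structure.
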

\begin{proof}
Since $AW \ne \alpha W$, the setup leading to \eqref{shapematrix} holds, and therefore $g \ge 2$ globally.  Suppose now that
$g=2$ everywhere.  We will construct the standard frame $(W,X,Y)$ in a slightly different way.

First, note that $W^\perp$ intersects the two-dimensional principal space in a one-dimensional subspace.
On any simply-connected domain in $M$, let $\widetilde Y$ be a unit principal vector field lying in $W^\perp$,
 corresponding to the principal curvature $\nu$ of multiplicity 2.  Let $\widetilde X = -\varphi \widetilde Y$.
Then $(W, \widetilde X, \widetilde Y)$ is a local orthonormal frame.

Since the span of $\{W, \widetilde X\}$ is $A$-invariant, $A$ is represented by a matrix
of the form \eqref{shapematrix}, with $\mu=0$.  (Although $\beta$
was specified to be a positive function in \eqref{shapematrix}, this can easily be arranged by changing the sign of $Y$
if necessary.)  Thus, we can drop the tildes on $X$ and $Y$.

Furthermore, $\nu$
must be an eigenvalue of the upper-left $2\times 2$ submatrix of $A$, from which the formula \eqref{nucond}
follows. The converse is trivial.
\end{proof}

Proposition \ref{algprop} implies that non-Hopf hypersurfaces with $g=2$ are part of a class of hypersurfaces
previously investigated by D\'iaz-Ramos and Dom\'inguez-Vazquez \cite{DD11} in the context of constant principal curvatures.
Namely, one defines a distribution $\H$ to be the span of $\{ W, AW, A^2 W, \ldots \}$.  For each $x\in M$,
 $\H_x \subset T_x M$ is the smallest subspace that contains $W_x$ and is invariant under $A$.
 D\'iaz-Ramos and Dom\'inguez-Vazquez study hypersurfaces where $\H$ has constant rank 2.  (This
is a generalization of the Hopf condition, under which $\H$ has constant rank one.)
Since from Proposition \ref{algprop} we have
$\mu=0$, it is clear that if $M$ is non-Hopf with $g=2$ then $\H$ has rank 2 for these hypersurfaces, but more is true:

\begin{theorem}\label{g2necess}  Let $M$ be a hypersurface in $\X$ with $AW \ne \alpha W$ and
$g \le 2$ principal curvatures at each point.
Then $\H$ has rank 2, and is integrable.  Furthermore, the derivatives of
components $\alpha, \beta, \lambda,$ and $\nu$ are zero along directions tangent to $\H$, and they satisfy
\begin{equation}\label{odesys}
\begin{aligned}
\dfrac{d\alpha}{ds} &= \beta(\alpha+\lambda-3\nu)\\
\dfrac{d\beta}{ds} &= \beta^2 + \lambda^2 +\nu(\alpha-2\lambda)+c\\
\dfrac{d\lambda}{ds} &= \dfrac{ (\lambda-\nu)(\lambda^2-\alpha\lambda-c)}{\beta} + \beta(2\lambda+\nu),
\end{aligned}
\end{equation}
where $d/ds$ stands for the derivative with respect to $Y$.
\end{theorem}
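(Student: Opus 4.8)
The plan is to extract the entire statement from the Gauss and Codazzi equations, written out in the standard frame $(W,X,Y)$. Because $\mu=0$ in \eqref{shapematrix} (Proposition \ref{algprop}), the plane $\mathrm{span}\{W,X\}$ is $A$-invariant, so $\H=\mathrm{span}\{W,X\}$ and has rank $2$. Using $\nabla_Z W=\varphi AZ$ together with $\varphi W=0$, $\varphi X=Y$, $\varphi Y=-X$, I would first record
\[
\nabla_W W=\beta Y,\qquad \nabla_X W=\lambda Y,\qquad \nabla_Y W=-\nu X,
\]
and then use skew-symmetry of $\nabla$ on the orthonormal frame to pin down every remaining connection coefficient in terms of three unknown structure functions $b:=\langle\nabla_W X,Y\rangle$, $e:=\langle\nabla_X X,Y\rangle$, $g:=\langle\nabla_Y X,Y\rangle$. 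A direct bracket computation then gives $[W,X]=(b-\lambda)Y$, so that \emph{$\H$ is integrable if and only if $b=\lambda$}; this single identity is the geometric core of the theorem.

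Next I would feed the frame into the Codazzi equation
\[
(\nabla_U A)V-(\nabla_V A)U=c\left(\langle U,W\rangle\varphi V-\langle V,W\rangle\varphi U-2\langle\varphi U,V\rangle W\right)
\]
for the three pairs $(W,X)$, $(W,Y)$, $(X,Y)$, and match components. This produces the cross-relations $W(\beta)=X(\alpha)$, $W(\lambda)=X(\beta)$, $W(\nu)=\beta g$, $X(\nu)=g(\lambda-\nu)$, the three transverse formulas
\[
Y(\alpha)=\beta(\alpha+b-3\nu),\quad Y(\beta)=\beta^2+b(\lambda-\nu)+\nu(\alpha-\lambda)+c,\quad Y(\lambda)=e(\lambda-\nu)+\beta(\lambda+2\nu),
\]
and one purely algebraic relation $\beta e=\beta^2+b(\lambda-\nu)-\alpha\lambda+\lambda\nu-c$, which solves for $e$ once $b$ is known. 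The payoff of this bookkeeping is that if $b=\lambda$, the algebraic relation forces $e=\beta+(\lambda^2-\alpha\lambda-c)/\beta$, and substituting into the three transverse formulas reproduces \eqref{odesys} verbatim. Thus the whole theorem reduces to proving $b=\lambda$ together with the vanishing of the $W$- and $X$-derivatives of $\alpha,\beta,\lambda,\nu$ (which, through $W(\nu)=\beta g$ and $\beta>0$, already entails $g=0$).

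The hard part will be this closure, and the obvious first attempt fails: every diagonal (sectional-curvature) component of the Gauss equation, such as $\langle R(W,X)X,W\rangle=c+\lambda\alpha-\beta^2$, merely reproduces the algebraic Codazzi relation and yields nothing new. To break the underdetermination I would instead exploit the \emph{off-diagonal} Gauss components whose ambient value is not itself Codazzi, the prototype being $\langle R(W,X)X,Y\rangle=0$, which expands to $W(e)-X(b)-(b-\lambda)g=0$ and so carries genuine derivative content; similarly $\langle R(W,Y)Y,X\rangle=\nu\beta$ and $\langle R(X,Y)Y,X\rangle=4c+\nu\lambda$ bring in $Y(b),Y(e),Y(g),W(g),X(g)$. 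In parallel I would differentiate the algebraic constraint \eqref{nucond}, most conveniently in the form $(\nu-\alpha)(\nu-\lambda)=\beta^2$, along $W$, $X$, and $Y$, and invoke the commutation identities $[U,V]f=U(Vf)-V(Uf)$ for $f\in\{\alpha,\beta,\lambda,\nu\}$. The real work is to organize this coupled first-order system so that the undetermined transverse and second-order terms cancel, leaving $b=\lambda$, $g=0$, and the vanishing of the four $\H$-derivatives all at once; I expect this elimination to be the main technical obstacle.

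Finally, with $b=\lambda$ in hand the remaining assertions follow mechanically. Integrability of $\H$ is immediate from $[W,X]=(b-\lambda)Y=0$. The vanishing of the $\H$-derivatives says exactly that $\alpha,\beta,\lambda,\nu$ are constant along the leaves of $\H$, so $d/ds$ in \eqref{odesys} is the derivative along the transverse direction $Y$. Substituting $b=\lambda$ and the resulting value of $e$ into the three transverse Codazzi formulas then gives precisely
\[
\frac{d\alpha}{ds}=\beta(\alpha+\lambda-3\nu),\quad \frac{d\beta}{ds}=\beta^2+\lambda^2+\nu(\alpha-2\lambda)+c,\quad \frac{d\lambda}{ds}=\frac{(\lambda-\nu)(\lambda^2-\alpha\lambda-c)}{\beta}+\beta(2\lambda+\nu),
\]
which is the system \eqref{odesys}.
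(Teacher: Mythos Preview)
Your framework is sound and your outline is essentially correct up to the point where you stop. However, the approach is genuinely different from the paper's, and the crucial closure step is only sketched, not executed.

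\medskip
\noindent\textbf{Comparison of approaches.} The paper works on the unitary frame bundle $\F$, setting up a Pfaffian exterior differential system whose integral $3$-folds are exactly the framed hypersurfaces in question. After restricting to the locus $\mu=0$, $(\nu-\alpha)(\nu-\lambda)=\beta^2$, it introduces the variable $\tau=(\nu-\alpha)/\beta$, computes the generator $2$-forms, applies Cartan's Lemma, and then exploits $d^2=0$ systematically. Your proposal stays on $M$ and phrases everything in terms of the Gauss and Codazzi equations plus commutator identities; these encode the same content (the EDS $d^2=0$ conditions are exactly Gauss plus commutation), but in a different bookkeeping scheme. Your approach is more elementary in that it avoids the frame-bundle machinery; the paper's EDS approach, on the other hand, organizes the compatibility conditions more transparently and makes the successive deductions $(p=0,\ s=0,\ q=\cdots,\ t=0)$ fall out almost mechanically.

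\medskip
\noindent\textbf{The gap.} You correctly identify that the whole theorem reduces to proving $b=\lambda$, $g=0$, and the vanishing of the $W$- and $X$-derivatives of $\alpha,\beta,\lambda,\nu$, and you honestly flag this as ``the main technical obstacle.'' But your sketch does not actually carry it out, and the paper's proof shows it is not a routine elimination. In the EDS language, the equations $d^2\nu=0$ and $d^2\tau=0$ repeatedly produce products of the form (unknown)$\times$(polynomial in $\tau$), with factors $2\tau^2-1$, $1-\tau^2$, $2-\tau^2$ appearing. One cannot simply divide: at each stage one must argue that if the polynomial factor vanished on an open set, then $\tau$ would be locally constant, and the explicit formula for $d\tau$ then forces the other factor to vanish anyway (or, in the last case $\tau^2=2$, yields a contradiction with the independence condition). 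Translated into your language, your off-diagonal Gauss identities and the $W$-, $X$-, $Y$-derivatives of \eqref{nucond} will not directly give $b=\lambda$ and $g=0$; they will give relations that factor, and you will need analogous open-set/constancy arguments to dispatch the degenerate branches. Until that case analysis is written out, the proof is incomplete.
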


We will postpone the proof of this theorem until section \ref{edssect}.

Hypersurfaces in $\CP^2$ or $\CH^2$ with $\H$ of rank 2 and integrable
are discussed in section \ref{weakly}, where we prove the following existence result:

\begin{theorem}\label{constructor}  Suppose $\alpha(s), \beta(s), \lambda(s), \nu(s)$ comprise a smooth solution of the underdetermined system
\eqref{odesys}, defined for $s$ in an open interval $I\subset \R$, and such that $\beta(s)$ is nonvanishing.
Then there exists a smooth immersion $\Phi:I\times \R^2\to \X$ determining a hypersurface $M$, equipped with
a standard frame $(W, X, Y)$, such that $\Phi$ maps the $\R^2$-factors onto leaves of $\H$.
The components of the shape operator are constant along these leaves and they coincide with the given solution.  
Furthermore, the leaves are homogeneous and have Gauss curvature zero.
\end{theorem}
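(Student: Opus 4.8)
The plan is to prove existence by the fundamental theorem of isometric immersions for real hypersurfaces in a complex space form, carried out concretely through a moving frame into the isometry group $G$ of $\X$ (so that $\mathfrak g = \mathfrak{su}(3)$ when $\X = \CP^2$ and $\mathfrak g = \mathfrak{su}(2,1)$ when $\X = \CH^2$). On the abstract manifold $N = I\times\R^2$ I would take $s$ as the coordinate on the $I$ factor and posit an orthonormal coframe $\omega^1,\omega^2,\omega^3$ dual to a frame I will call $(W,X,Y)$, with $Y$ pointing along the $I$ factor so that $d/ds$ is the derivative along $Y$ and $\omega^3$ annihilates the $\R^2$ directions. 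The prescribed shape operator is \eqref{shapematrix} with $\mu=0$ and with entries the given functions $\alpha(s),\beta(s),\lambda(s),\nu(s)$; crucially these are declared independent of the $\R^2$-coordinates, which is what will make them constant along the leaves. Using the hypersurface identity $\nabla_X W = \varphi A X$ together with the almost-contact identities for $\nabla\varphi$ and the requirement that the structure functions depend only on $s$, I would solve for candidate Levi-Civita connection forms $\omega^i{}_j$, writing each as an explicit combination of $\omega^1,\omega^2,\omega^3$ with coefficients depending on $s$.

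Next I would assemble the coframe, the connection forms $\omega^i{}_j$, and the second-fundamental-form entries into a single $\mathfrak g$-valued $1$-form $\theta$ on $N$, arranged according to the decomposition of $\mathfrak g$ adapted to the frame bundle of $\X = G/K$. The heart of the proof is to verify the Maurer--Cartan equation $d\theta + \tfrac12[\theta,\theta] = 0$. This decomposes into three families of identities: the first structure equations $d\omega^i = -\omega^i{}_j\wedge\omega^j$, which merely define the exterior derivatives of the coframe; the Gauss equation, which must reproduce the ambient holomorphic sectional curvature $4c$; and the Codazzi equations. By the design of $\theta$, the Codazzi components in the $Y$-direction collapse to exactly the system \eqref{odesys}, which holds by hypothesis, while the components along the leaf directions $W,X$ reduce to the statement that $\alpha,\beta,\lambda,\nu$ have vanishing $W$- and $X$-derivatives, true by construction.

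Since $N = I\times\R^2$ is simply connected and $\theta$ satisfies the Maurer--Cartan equation, the Cartan--Darboux theorem produces a smooth map $F\colon N\to G$, unique up to left translation in $G$, with $F^*\omega_G = \theta$, where $\omega_G$ is the left Maurer--Cartan form. Composing with the projection $\pi\colon G\to\X$ gives the desired map $\Phi = \pi\circ F$; it is an immersion because the coframe part of $\theta$ is pointwise linearly independent, and the frame $(W,X,Y)$ read off from $F$ is then a standard frame realizing the prescribed shape operator (with $\beta>0$ arranged by the sign of $X$, using that $\beta$ is nonvanishing). The slices $\{s\}\times\R^2$ are the maximal integral manifolds of $\omega^3 = 0$, i.e.\ of $\H = \mathrm{span}\{W,X\}$, so $\Phi$ carries them onto the leaves of $\H$; on each slice every structure function is constant, so the restricted structure equations have constant coefficients, which both exhibits the leaf as an orbit of a subgroup of $G$ (hence homogeneous) and, through the Gauss equation restricted to $\mathrm{span}\{W,X\}$, forces its intrinsic Gauss curvature to vanish.

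The main obstacle is the Maurer--Cartan verification of the second step, and within it the correct determination of the connection forms $\omega^i{}_j$ in the leaf directions, which are not directly prescribed by the shape operator. I expect that $\nabla_X W = \varphi A X$ and the tangential Codazzi equations together pin these coefficients down, and that the remaining work is to check that, with these choices, the full Gauss equation — carrying the curvature term $c$ — and every Codazzi component close up identically, not only the $Y$-direction ones encoded in \eqref{odesys}. This is a matter of careful bookkeeping across all frame directions rather than a single hard estimate, but it is precisely where the ambient curvature constant $c$ and the form of \eqref{odesys} must conspire, and hence the step most likely to conceal a sign or normalization subtlety.
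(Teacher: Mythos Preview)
Your strategy is essentially the paper's: assemble a $\mathfrak g$-valued $1$-form on $I\times\R^2$ encoding the prescribed shape operator, verify flatness (Frobenius/Maurer--Cartan), integrate on the isometry group, and project to $\X$. The paper phrases this as a Frobenius Pfaffian system $\J$ on $I\times\F$, pulled back to $I\times G$ with $G=U(3)$ or $U(1,2)$, but the content is the same.

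Two points of comparison are worth noting. First, the ``missing'' connection coefficient you flag as the main obstacle---the leaf-direction part of $\omega^2_1$---is not left to be discovered from $\nabla_X W=\varphi AX$ and Codazzi alone; the paper simply imports the formula $\delta=(\beta^2+\lambda^2-\alpha\lambda-c)/\beta$ and $\rho=0$ already derived in Proposition~\ref{charp}, and with that choice the Frobenius condition is known to hold (cf.\ the Remark following the proof of Theorem~\ref{g2necess}). Your ``I expect'' is correct, but the paper has already done that computation. Second, rather than invoking Cartan--Darboux abstractly, the paper integrates explicitly: a linear ODE in $s$ gives a curve $U(s)$ in $G$, and for each fixed $s$ the leaf directions are governed by two constant matrices $A(s),B(s)\in\mathfrak g$ which are checked to commute, so the integral surface is the coset $U(s)\exp(xA(s)+wB(s))$. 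This explicit formula is what delivers global existence on all of $\R^2$ and makes the homogeneity of the leaves immediate (each is an orbit of a $2$-dimensional abelian subgroup), whereas your constant-coefficients argument reaches the same conclusion but less transparently.
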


\begin{corollary}  Suppose $\alpha(s), \beta(s), \lambda(s), \nu(s)$ satisfy the system
\eqref{odesys} and the algebraic condition \eqref{nucond}.  Then the hypersurface constructed
by Theorem \ref{constructor} is a non-Hopf hypersurface with two distinct principal curvatures.
Conversely, every such hypersurface is locally of this form.
\end{corollary}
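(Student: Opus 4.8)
The plan is to handle the two directions separately, reducing each to results already in hand. For the forward implication, I would feed the given solution $(\alpha(s),\beta(s),\lambda(s),\nu(s))$ into Theorem \ref{constructor}, obtaining a hypersurface $M\subset\X$ with a standard frame $(W,X,Y)$ whose shape operator takes the form \eqref{shapematrix} with exactly these four components (and where $\beta$ is necessarily nonvanishing, since otherwise \eqref{odesys} would be undefined). Because $\Phi$ carries the $\R^2$-factors onto the leaves of $\H$, the distribution $\H$ has rank $2$; computing $A^2W=(\alpha^2+\beta^2)W+\beta(\alpha+\lambda)X+\beta\mu\,Y$ and observing that its $Y$-component must vanish (as $\H=\operatorname{span}\{W,X\}$ while $\beta\neq0$) forces $\mu=0$. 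Since $AW=\alpha W+\beta X\neq\alpha W$, the hypersurface is non-Hopf, so $g\ge2$; and with $\mu=0$ together with the algebraic condition \eqref{nucond} in force, Proposition \ref{algprop} gives $g\le2$. Hence $g=2$ everywhere.

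For the converse, let $M$ be a non-Hopf hypersurface with $g\le2$. The non-Hopf assumption gives $g\ge2$, so $g=2$ and the hypotheses of Theorem \ref{g2necess} hold. That theorem provides a standard frame in which $\H$ has rank $2$ and is integrable, the functions $\alpha,\beta,\lambda,\nu$ are constant along the leaves of $\H$ and therefore depend only on the transverse parameter $s$ (arc length along the integral curves of $Y$), and they satisfy \eqref{odesys}; meanwhile Proposition \ref{algprop} forces $\mu=0$ and the relation \eqref{nucond}. Thus the four functions extracted from $M$ constitute a solution of \eqref{odesys} obeying \eqref{nucond}, with $\beta$ nonvanishing.

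To finish, I would show that $M$ is locally congruent to the model hypersurface $\widehat M$ that Theorem \ref{constructor} builds from this same data. By construction $\widehat M$ carries a standard frame whose shape operator components agree, as functions of $s$, with those of $M$; consequently $M$ and $\widehat M$ have the same induced metric, the same shape operator, and the same structure vector field $W$ relative to adapted frames, all encoded by the single solution of \eqref{odesys}. Invoking the uniqueness half of the fundamental (Bonnet-type) theorem for hypersurfaces in a complex space form, I obtain a holomorphic isometry of $\X$ mapping a neighborhood of each point of $M$ onto an open subset of $\widehat M$, which is precisely the assertion that $M$ is locally of the constructed form.

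The main obstacle is this last rigidity step. Theorem \ref{constructor} is stated purely as an existence result, so local congruence cannot be read off from it directly and must instead come from the uniqueness part of the fundamental theorem for hypersurfaces in $\CP^2$ and $\CH^2$. The delicate point is to verify that the matching data includes not only the pair (first fundamental form, shape operator) but also the alignment of the structure vector field $W$ and of the leaves of $\H$, so that the resulting congruence preserves the full Hermitian structure of the ambient space rather than merely its Riemannian metric.
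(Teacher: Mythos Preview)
Your argument is correct and follows the same two-part skeleton as the paper's proof: Proposition~\ref{algprop} together with Theorem~\ref{constructor} for the forward direction, and Proposition~\ref{algprop} together with Theorem~\ref{g2necess} for the converse. Two remarks on where you elaborate beyond what the paper writes.

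First, your computation of $A^2W$ to deduce $\mu=0$ is correct but unnecessary: the proof of Theorem~\ref{constructor} builds the hypersurface as an integral of a Pfaffian system in which $\mu=0$ is already encoded in the generator 1-forms $\vartheta_1,\vartheta_2,\vartheta_3$, so the constructed hypersurface has $\mu=0$ by construction and the ``if'' part of Proposition~\ref{algprop} applies directly.

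Second, for the congruence step in the converse you invoke a Bonnet-type fundamental theorem for real hypersurfaces in complex space forms. This is a legitimate route, and you are right to flag that the matching data must include the structure vector $W$ (equivalently the almost-contact structure), not just the metric and shape operator. The paper, however, does not argue this way: its two-sentence proof relies implicitly on the Frobenius uniqueness established in the Remark following the proof of Theorem~\ref{g2necess}. Both $M$ and the model $\widehat M$ lift to integral 3-folds of the same Frobenius system generated by $\theta_0,\ldots,\theta_7$ on $\F\times\U$; since the isometry group of $\X$ acts transitively on $\F$, one can align frames over a chosen point, and then uniqueness of the integral leaf forces the two lifts to agree locally. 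Your approach via the fundamental theorem is equivalent in content but extrinsic, while the paper stays inside the exterior-differential-systems machinery already set up in \S\ref{edssect}; the latter has the advantage that no separate rigidity theorem needs to be cited or verified.
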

\begin{proof}  The first statement follows immediately from Theorem \ref{constructor} and
the `if' part of Proposition \ref{algprop}. The second statement follows from the `only if'
part of the proposition and Theorem \ref{g2necess}.
\end{proof}

This last result shows that Theorems 4.6 and 4.7 of \cite{nrsurvey}, quoted at the beginning of section
\ref{intro}, do not extend to $n=2$.  For, given initial values $\alpha_0, \beta_0, \lambda_0$ such that
 $\beta_0\ne 0$, we can define a function
$F(\alpha,\beta,\lambda)$ on a neighborhood of this point in $\R^3$ such that $\nu=F(\alpha,\beta,\lambda)$ satisfies \eqref{nucond} identically.
Then substituting this for $\nu$ in the system \eqref{odesys} gives a determined system.  Applying standard
existence theory for systems of ODE, and using our initial values at $s=0$, will yield a solution
$\alpha(s)$, $\beta(s)$, $\lambda(s)$ of \eqref{odesys} with $\nu(s) = F(\alpha(s), \beta(s), \lambda(s))$
(so that \eqref{nucond} is satisfied), and which is defined for $s$ on an open interval $I$ containing zero.  Because the
system is autonomous, using the values $(\alpha(s_1), \beta(s_1), \lambda(s_1))$ for any nonzero $s_1 \in I$ as initial conditions
for the system will recover the same solution.

To summarize, there is a two-parameter family of solution trajectories $T$ for \eqref{odesys} which satisfy \eqref{nucond}
with $\beta$ non-vanishing.  Each of these determines a non-Hopf hypersurface
$M_T$ with $g=2$, up to rigid motions.  Conversely, given any non-Hopf hypersurface $M'$ with $g=2$ and $p_0 \in M'$,
we may use the components of the shape operator of $M'$ at $p_0$ as initial conditions to determine an $M_T$ which
is congruent to an open subset of $M'$ containing $p_0$.

\section{Moving Frames Calculations}\label{edssect}
In this section we will prove Theorem \ref{g2necess} using the techniques of moving frames
and exterior differential systems.  Background material in this subject may be found in the
textbook \cite{cfb}.  We begin by reviewing the geometric structure of the frame bundle that we will use.

An orthonormal frame $(e_1, e_2, e_3, e_4)$
at a point in $\X$ is defined to be unitary if $J e_1 = e_2$ and $J e_3 = e_4$.
We let $\F$ be the bundle of {\em unitary frames} on $\X$.
On $\F$ there are canonical forms $\w^i$ and connection forms $\w^i_j$ for $1\le i,j \le 4$.  These have the
property that if $(e_1, e_2, e_3, e_4)$ is any local unitary frame field on $\X$ and $f$ is the corresponding local section
of $\F$, then the $f^*\w^i$ comprise the dual coframe field, and
$$\langle e_i, \widetilde\nabla_\bv e_j \rangle = \bv \intprod f^* \w^i_j$$
where $\widetilde\nabla$ is the Levi-Civita connection on $\X$ and  $\bv$ is tangent to $M$.  The connection forms satisfy $\w^i_j = -\w^j_i$, but also the {\em structure equations}
$$d\w^i = -\w^i_j \wedge \w^j, \qquad d\w^i_j = - \w^i_k \wedge \w^k_j + \Omega^i_j,$$
where $\Omega^i_j$ are the curvature 2-forms.  The latter encode the curvature tensor of $\X$,
because for any local section,
$$
f^*\Omega^i_j = \tfrac12 R^i_{jk\ell} f^* (\w^k \wedge \w^\ell), \qquad R^i_{jk\ell}  = \langle e_i, R(e_k,e_\ell) e_j\rangle.
$$
(The structure equations and their relation to the curvature tensor hold on the orthonormal
frame bundle of any Riemannian manifold; see \S2.6 in \cite{cfb}.)
Moreover, because $\X$ is a K\"ahler manifold and hence $J$ is parallel with
respect to $\widetilde\nabla$, we have
$$\w^3_1 = \w^4_2, \quad \w^3_2 = -\w^4_1,
$$
with similar relationships holding among the curvature 2-forms.
We will use 1-forms $\w^1, \ldots, \w^4$, $\w^2_1$, $\w^4_1$, $\w^4_2$, $\w^4_3$ as a (globally defined) coframe
on $\F$.  In order to compute the exterior derivatives of these 1-forms, we will need to know the
curvature 2-forms.  Using the fact that $\X$ is a space of constant holomorphic sectional curvature $c$, Theorem
1.1 in \cite{nrsurvey} implies that
\begin{align*}
\Omega^2_1 &= -c(4 \w^1 \wedge \w^2 + 2 \w^3 \wedge \w^4), &
\Omega^4_1 &= -c(\w^1 \wedge \w^4 - \w^2 \wedge \w^3), \\
\Omega^4_3 &= -c(2\w^1 \wedge \w^2 + 4\w^3 \wedge \w^4), &
\Omega^4_2 &= -c(\w^1 \wedge \w^3 + \w^2 \wedge \w^4).
\end{align*}

Our method for studying and constructing (framed) hypersurfaces will be to treat the images
of the sections $f$ as integral submanifolds of a Pfaffian exterior differential system.
Briefly, a Pfaffian system $\I$ on a manifold $B$ is a graded ideal inside the
algebra $\Omega^* B$ of differential forms on $B$, which near any point is generated algebraically
by a finite set of 1-forms and their exterior derivatives.  A submanifold $N \subset B$
is an integral of $\I$ if and only if $i^* \psi=0$ for all differential forms $\psi$ in $\I$, where
$i: N \to B$ is the inclusion map.  (We will often abbreviate this by saying that
$\psi = 0$ {\em along N}.)

We will next show that a frame for a hypersurface $M$, adapted as in \S4, corresponds to an integral
submanifold of a certain Pfaffian system.
Given a standard frame $(W,X,Y)$ on $M$ satisfying $AW\ne \a W$,
 we can define a local section $f:M \to \F\vert_M$ by letting
\begin{equation}\label{WeXY}
e_3 = W, \quad e_4 = JW, \quad e_2 = X, \quad e_1 = -Y.
\end{equation}
Then the pullbacks of the 1-forms on $\F$ satisfy
\begin{equation}\label{pullsat}
f^* \w^4 =0, \quad f^*\begin{bmatrix}\w^4_3 \\  \w^4_2  \\ \w^4_1 \end{bmatrix}
= \begin{pmatrix} \alpha & \beta & 0 \\ \beta & \lambda & -\mu \\
0 & -\mu & \nu \end{pmatrix} f^*\begin{bmatrix} \w^3 \\ \w^2 \\  \w^1 \end{bmatrix}
\end{equation}
where $\alpha, \beta, \lambda,\mu, \nu$ are the functions on $M$ giving the components of
the shape operator \eqref{shapematrix}.

To see that these conditions are equivalent to the vanishing of certain 1-forms along a submanifold,
we need to introduce the shape operator components as extra variables.  In particular,
we let $\alpha, \beta, \lambda,\mu, \nu$
be coordinates on $\R^5$, and we define following 1-forms on $\F \times \R^5$:
\begin{align*}
\theta_0 &:= \w^4\\
\theta_1 &:= \w^4_1 - \nu \w^1 + \mu \w^2\\
\theta_2 &:= \w^4_2 + \mu \w^1 - \lambda \w^2 - \beta \w^3\\
\theta_3 &:= \w^4_3 - \beta \w^2 - \alpha \w^3.
\end{align*}
(The $\w^i$ and $\w^i_j$ are pulled back from $\F$ to $\F \times \R^5$.)
Given a standard frame on $M$, the fibered product of $f$ with the graphs of $\alpha, \beta, \lambda,\mu, \nu$
gives a mapping $\widehat{f}:M \to \F\times \R^5$ whose image is a 3-dimensional integral manifold
of the Pfaffian system generated by 1-forms $\theta_0, \ldots, \theta_3$.  Moreover, this submanifold
satisfies the  {\em independence condition} $\widehat{f}^* (\w^1 \wedge \w^2 \wedge\w^3) \ne 0$.
Conversely, every 3-dimensional integral manifold of the differential forms $\theta_0, \ldots, \theta_3$ that satisfies the independence condition arises in exactly this way, from a standard adapted frame along a hypersurface in $\X$.

\begin{proof}[Proof of Theorem \ref{g2necess}]
Let $\U \subset \R^5$ be defined by $\beta\ne 0$, $\mu=0$ and
\begin{equation}\label{altnucond}
(\nu-\alpha)(\nu-\lambda)=\beta^2.
\end{equation}
(This is just \eqref{nucond} rewritten.)
Because $\beta\ne0$, we have $\alpha -\nu \ne 0$ at each point of $\U$, and
$\U$ is a smooth 3-dimensional submanifold of $\R^5$.
We will use $\beta$, $\nu$ and $\tau=(\nu-\alpha)/\beta$ as coordinates on $\U$, in terms of which
\begin{equation}\label{parsubs}
\alpha=\nu -\beta\tau, \qquad \lambda = \nu - \beta/\tau.
\end{equation}
Note that $\tau$ is always nonzero.
The geometric meaning of $\tau$ is that, if we write the unit $\nu$-eigenvector in
the span of $\{W,X\}$ as $\cos\phi\, W + \sin\phi\, X$, then $\tau=\tan\phi$.

Now let $M$ be a non-Hopf hypersurface with $g \le 2$ distinct principal curvatures.
In the proof of Proposition \ref{algprop} we developed a (local) standard frame on $M$ for which
the components of the shape operator satisfy $\mu=0$ and \eqref{altnucond}.
Let $f$ be the corresponding section of $\F\vert_M$.
Then the image of $\widehat{f}$ is an integral of $\theta_0, \ldots, \theta_3$ which
lies in $\F\times \U$.
Accordingly, we pull back the forms  from $\F \times \R^5$ to $\F\times \U$, giving
\begin{align*}
\theta_1 &= \w^4_1 - \nu \w^1\\
\theta_2 &= \w^4_2 - ( \nu -\beta/\tau) \w^2 - \beta \w^3\\
\theta_3 &= \w^4_3 - \beta \w^2 - (\nu-\beta\tau) \w^3,
\end{align*}
and let $\I$ be the Pfaffian exterior differential system on $\F\times \U$ generated by these re-defined
1-forms.

As an algebraic ideal, $\I$ is generated by these 1-forms and their exterior derivatives.  We may
simplify the latter by omitting wedge products involving the $\theta_0, \ldots, \theta_3$ as factors.  For example, we compute
$$-d\theta_0 = \theta_1 \wedge \w^1 +\theta_2 \wedge \w^2 +\theta_3 \wedge \w^3,$$
so that $d\theta_0$ adds no new algebraic generators for the ideal; we express this fact by writing
$d\theta_0 \equiv 0$ mod $\theta_0, \ldots, \theta_3$.  Similarly, we compute
\begin{equation}\label{twofer}
\left.
\begin{aligned}
-d\theta_1 &\equiv \pi_1 \& \w^1+ \pi_4 \& (\w^2-\tau\w^3),\\
-d\theta_2 &\equiv \pi_4 \&\w^1 + \pi_3 \& \w^2 + \pi_2 \& \w^3,\\
-d\theta_3 &\equiv -\tau\pi_4 \& \w^1 +\pi_2 \& \w^2 +\left((1+\tau^2)\pi_1 -2\tau \pi_2 -\tau^2 \pi_3\right) \& \w^3
\end{aligned}\right\} \mod \ \theta_0, \ldots, \theta_3,
\end{equation}
where
\begin{equation}\label{pidefs}
\begin{aligned}
\pi_1 &:= d\nu + \dfrac{3c\tau}{1+\tau^2}\w^1, \\
\pi_2 &:= d\beta + \left(\beta^2-2\beta\nu\tau +2c +\dfrac{\beta \nu}{\tau}\right)\w^1, \\
\pi_3 &:= d\left(\nu - \dfrac{\beta}\tau\right)+\left(3\beta \nu - \dfrac{\beta^2}{\tau}\right) \w^1,\\
\pi_4 &:= \dfrac{\beta}{\tau}\w^2_1 +(c-\beta\nu\tau)\w^3.
\end{aligned}
\end{equation}
The 1-forms $\pi_1, \ldots, \pi_4$, along with $\w^1,\w^2,\w^3$ and $\theta_0, \ldots, \theta_3$,
complete a coframe on $\F\times \U$ which is adapted to $\I$ in the sense that the generator
2-forms of $\I$ are most simply expressed in terms of this coframe.

Suppose that $\Sigma$ is an integral 3-fold of $\I$ satisfying the independence condition.
Let $\Theta_1, \Theta_2, \Theta_3$ be the 2-forms on the right-hand side of \eqref{twofer}, 
which must vanish along $\Sigma$.  The vanishing of $\Theta_1$ implies (using Cartan's Lemma) that
\begin{equation}\label{firstpie} \pi_1 = m\, \w^1 + p\,\wt^2, \qquad \pi_4 = p\, \w^1 + q\, \wt^2
\end{equation}
for some functions $m,p,q$ along $\Sigma$.  (For convenience, we will let $\wt^2$ denote $\w^2 - \tau \w^3$
from now on.)
On the other hand,
$$\Theta_3 + \tau \Theta_2 = (\pi_2 + \tau \pi_3) \wedge \wt^2 + (1+\tau^2)\pi_1 \wedge \w^3,$$
and applying Cartan's Lemma to the vanishing of this 2-form yields
\begin{equation}\label{secondpie}
\pi_2 + \tau \pi_3 = s\,\wt^2 + u\, \w^3, \qquad  (1+\tau^2) \pi_1 = u\, \wt^2 +  v\, \w^3
\end{equation}
for some functions $s,u,v$ along $\Sigma$.  Comparing \eqref{firstpie} and \eqref{secondpie}
shows that $u = p(1+\tau^2)$ and $m=v=0$; hence
$$\pi_1 = p \,\wt^2, \quad \pi_4 = p \,\w^1 + q\,\wt^2, \quad \pi_2 + \tau\pi_3 = p (1+\tau^2) \w^3 + s\, \wt^2$$
along $\Sigma$.  Substituting these into the equation $\Theta_2=0$ implies that
$$\pi_3 = q\, \w^1 + t\, \wt^2 + s\, \w^3 $$
for an additional function $t$ along $\Sigma$.

Substituting these values into the definitions \eqref{pidefs} of $\pi_1$ through $\pi_4$ lets us determine
the values of the exterior derivatives
\begin{equation}\label{dnbl}
\begin{aligned}
d\nu &= - \dfrac{3 c \tau}{1+\tau^2} \w^1+ p\,\wt^2, \\
d\beta &= \left(2\beta \nu \tau - \beta^2 - \dfrac{\beta\nu}{\tau} - 2 c -\tau q\right) \w^1 + (s-\tau t) \wt^2 + (p(1+\tau^2) - \tau s) \w^3,\\
d\lambda &= d\left(\nu - \dfrac{\beta}\tau\right) = \left( \dfrac{\beta^2}{\tau} - 3\beta\nu +q\right) \w^1 + t\, \wt^2 + s\,\w^3
\end{aligned}
\end{equation}
along $\Sigma$, as well as
$$\w^2_1 = \dfrac{\tau}{\beta}( p\,\w^1 + q\,\wt^2 + (\beta \nu \tau - c) \w^3).$$
Of course, the 1-forms on the right in \eqref{dnbl} must be closed along $\Sigma$.  Computing the
exterior derivatives of these, modulo $\theta_0, \ldots, \theta_3$ and using the above values for $d\nu, d\beta, d\lambda$
and $\w^2_1$, gives algebraic conditions that $p,q,s,t$ must satisfy.  For example, we compute that
$$ 0 = d^2 \nu \wedge \wt^2 = \dfrac{4c\tau(2\tau^2-1)}{\beta(1+\tau^2)}  p\, \w^1 \wedge \w^2 \wedge \w^3$$
along $\Sigma$.  The vanishing of this 3-form implies that at each point of $\Sigma$,
either $p=0$ or $\tau^2 = \tfrac12$.  So, if $p\ne 0$ at some point of $\Sigma$, then 
$\tau^2 = \tfrac12$ on an open set around that point.  On the other hand, the equations \eqref{dnbl} give
\begin{equation}\label{deetau}
d\tau = - \left( \nu(1+\tau^2) + \dfrac{c\tau(2-\tau^2)}{\beta(1+\tau^2)}\right)\w^1 -\dfrac{\tau(p\tau-s)}{\beta}\wt^2 + \dfrac{p \tau(1+\tau^2)}{\beta}\w^3,
\end{equation}
so that if $\tau$ is constant on an open set in $\Sigma$ then $p=0$ on that
set.  Thus, we conclude that $p$ vanishes identically on $\Sigma$.

With this conclusion taken into account, we have
$$ 0 = d^2\nu = \dfrac{3c(1-\tau^2)}{\beta(1+\tau^2)^2}s\, \w^1 \wedge \wt^2
+ \dfrac{3c}{\beta(1+\tau^2)}\left( (\beta\nu-q) \tau^3 - c\tau^2 +\beta^2 -\beta\nu\tau\right) \w^2 \wedge \w^3$$
along $\Sigma$.  From the first term, at each point either $s=0$ or $\tau^2 = 1$, and \eqref{deetau} (with $p=0$) lets us conclude that $s=0$ whenever $\tau$ is locally constant.
Hence $s$ vanishes identically on $\Sigma$, and from the second term
$$q = \dfrac{\beta \nu \tau^3 - c\tau^2 +\beta^2 -\beta\nu\tau}{\tau^3}.$$
With these values for $p,q,s$ taken into account, we have
$$0 = d^2\tau = \dfrac{c \tau^2 (2-\tau^2)}{\beta^2(1+\tau^2)} t\,\w^1 \wedge \wt^2$$
along $\Sigma$, which implies that at each point either $t=0$ or $\tau^2 = 2$.  If the latter happens on an open
set, then \eqref{deetau} implies that $\nu=0$ on that same open set; in that case, substituting $\nu=0$ and $d\tau=0$ into the
system 2-forms gives
$$\Theta_3+\tau \Theta_2 = -c(2\w^2 + \tau\w^3) \wedge \w^1,$$
so that the vanishing of these 2-forms is incompatible with the independence condition.
From this contradiction we conclude that $t=0$ identically on $\Sigma$.

Using the values deduced for $p,q,s,t$ above, we conclude
that the following 1-forms vanish along $\Sigma$:
\begin{align*}
\theta_4 &:= d\nu + \dfrac{3c\tau}{1+\tau^2}\w^1, \\
\theta_5 &:= d\beta + \left(\beta^2+\dfrac{\beta^2}{\tau^2}-\beta\nu\tau +c\right)\w^1, \\
\theta_6 &:= d\lambda+\dfrac{c\tau^2 - \beta^2(1+\tau^2) + 2\beta\nu\tau^3 + \beta\nu\tau}{\tau^3} \w^1,\\
\theta_7 &:= \dfrac{\beta}{\tau}\w^2_1 +\dfrac{c\tau^2 -\beta^2 -\beta\nu\tau^3+\beta\nu\tau}{\tau^3}\w^2
+\dfrac{\beta^2 - \beta\nu\tau}{\tau^2}\w^3.
\end{align*}

Now we can establish the assertions in the theorem.  Note first that $\widehat{f}^* \w^1$ annihilates
the distribution $\H$ on $M$, so to show that $\H$ is integrable we compute
\begin{align*}d\w^1 &\equiv -\w^2_1 \wedge \w^2 - \w^4_1 \wedge \w^3 \mod \theta_0\\
                    &= \lambda \w^2 \wedge \w^3 - \lambda \w^2 \wedge \w^3 \mod \theta_7\\
                    &=0.
\end{align*}
Next, the vanishing of $\theta_4, \theta_5, \theta_6$ implies that
$\nu,\beta,\lambda$ (and hence $\alpha$) are constant along the leaves of $\H$.  Since $Y=-e_1$, the vanishing
of these 1-forms also gives us the $Y$-derivatives of these variables:
\begin{equation}\label{yelp}
\begin{aligned}
Y\nu &= \dfrac{3c\tau}{1+\tau^2},\\
Y\beta &= \beta^2+\dfrac{\beta^2}{\tau^2}-\beta\nu\tau +c  =\beta^2+ (\lambda-\nu)^2+\nu(\alpha-\nu)+c,\\
Y\lambda = Y(\nu-\beta/\tau)&= \dfrac{c\tau^2 - \beta^2(1+\tau^2) + 2\beta\nu\tau^3 + \beta\nu\tau}{\tau^3}.
\end{aligned}
\end{equation}
Using $\alpha = \nu-\beta\tau$ we also get
$$Y\alpha = -\beta(\nu + \beta\tau + \beta/\tau) =-\beta(3\nu-\alpha-\lambda).$$
Thus,  $\alpha, \beta, \lambda$ and $\nu$ satisfy the underdetermined system of differential equations
\eqref{odesys}.
(We leave it to the interested reader to check that the right-hand side of $Y\lambda$ in \eqref{yelp} coincides
with the third equation in \eqref{odesys}, once the substitutions \eqref{parsubs} are made.)
\end{proof}

\begin{remark}
In the preceding proof, the possible values of the 1-forms
$\w^2_1, d\beta, d\nu, d\tau$ (as well as $\w^4, \w^4_1, \w^4_2, \w^4_3$) on
an integral submanifold $\Sigma$ satisfying the independence condition are completely determined
at each point of $\F\times \U$.  In other words, $\Sigma$ must be tangent
to the rank 3 distribution on $\F\times \U$ whose tangent spaces are annihilated by
$\theta_0, \ldots, \theta_7$ at each point.  Of course, 3-dimensional integral submanifolds
of such a distribution only exist at points where the Frobenius condition is satisfied.
This means we must check that the exterior derivatives of $\theta_0, \ldots, \theta_7$ are
zero modulo these same 1-forms.  Fortunately, this condition holds identically, and there exists
a unique local integral submanifold $\Sigma$ through each point of $\F\times \U$.  Global existence
will follow from Theorem \ref{constructor}.
\end{remark}

\section{2-Hopf Hypersurfaces}\label{weakly}

As remarked earlier, one can view the condition that $\H$ is of rank 2 as a weakening of the Hopf hypersurface condition.
By itself, this condition is too weak:  for example, one can show using exterior differential systems that 3-dimensional hypersurfaces in $\X$ for which $\H$ has rank 2 are abundant, at least locally, since examples can be constructed using the Cartan-K\"ahler theory
depending on a choice of two functions of two variables.  Thus, we impose the additional condition that
$\H$ is an integrable distribution.  Such hypersurfaces are still quite flexible, and depend locally on a choice
of five functions of one variable; for example, there exists such a hypersurface through any given curve $\Gamma$
in $\X$, with the distribution $\H$ prescribed along $\Gamma$ (provided that the prescribed 2-plane is
neither tangent nor perpendicular to the curve at any point).  Motivated by these considerations, we make the following definition:
\defn{A hypersurface in $\CP^n$ or $\CH^n$ is said to be $k$-Hopf if $\H$ is integrable and of rank $k$.}

When $k=1$, the integrability condition is vacuous and this reduces to the usual notion of a Hopf hypersurface.  Also note that if $M^{2n-1}$ is
$k$-Hopf, then $1 \le k \le 2n-1$.  Because of Theorem \ref{g2necess}, we are interested in the 2-Hopf condition when $n=2$.

\begin{prop}\label{flat} Let $M^3 \subset \X$ be a 2-Hopf hypersurface.  Then the $\H$-leaves
within $M$ are flat.
\end{prop}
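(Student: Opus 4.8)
The plan is to reduce flatness of a leaf to the vanishing of its intrinsic connection form in the adapted unitary frame. First I would extract from the $2$-Hopf hypothesis the key algebraic fact that $\mu=0$. Since $\H$ has rank $2$ and $AW=\a W+\b X$ with $\b\ne 0$ (the hypersurface is non-Hopf), we have $\H=\mathrm{span}\{W,AW\}=\mathrm{span}\{W,X\}$; as $\H$ is by definition $A$-invariant, $AX\in\H$, and reading off the shape matrix \eqref{shapematrix} this forces the $Y$-component $\mu$ to vanish. Adopting the unitary frame \eqref{WeXY}, so that $e_3=W$, $e_2=X$, $e_1=-Y$ and $e_4=JW$ is the unit normal to $M$, the leaf $L$ through a point is tangent to $\mathrm{span}\{e_2,e_3\}=\H$. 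Because $Y=\varphi X$ is orthogonal to $\H$ and $e_4$ is normal to $M$, both canonical forms $\w^1$ (dual to $e_1$) and $\w^4$ restrict to zero on $L$, while $(\w^2,\w^3)$ restricts to an orthonormal coframe there.

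Next I would restrict Cartan's first structure equations $d\w^i=-\w^i_j\wedge\w^j$ to $L$ via the inclusion $\iota:L\hookrightarrow\F\vert_M$. Every term carrying $\w^1$ or $\w^4$ as a factor dies, leaving $\iota^*d\w^2=-\iota^*(\w^2_3\wedge\w^3)$ and $\iota^*d\w^3=-\iota^*(\w^3_2\wedge\w^2)$. By uniqueness of the Levi-Civita connection, this identifies $\iota^*\w^2_3$ as the connection form of the induced metric on $L$ in the coframe $(\w^2,\w^3)$. The one subtlety here is the legitimacy of this identification: it holds precisely because $e_3$ is tangent to $L$, so the $e_3$-component of $\nabla^M_{(\cdot)}e_2$ is already the intrinsic $\nabla^L$ coefficient and no second-fundamental-form correction of $L$ inside $M$ intervenes.

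Finally I would show this form vanishes. The K\"ahler relation $\w^3_2=-\w^4_1$ gives $\w^2_3=\w^4_1$, and \eqref{pullsat} with $\mu=0$ gives $\w^4_1=\nu\,\w^1$; hence $\iota^*\w^2_3=\nu\,\iota^*\w^1=0$. Thus the connection form of $L$ is identically zero, so $(\w^2,\w^3)$ is a closed (indeed parallel) coframe on $L$, its induced metric is locally Euclidean, and $L$ is flat---this parallel coframe is also what underlies the homogeneity claim of Theorem \ref{constructor}. The main thing to get right is the bookkeeping just described (deducing $\mu=0$ from $A$-invariance and tracking the K\"ahler sign relations $\w^3_1=\w^4_2$, $\w^3_2=-\w^4_1$); by contrast, the ambient curvature forms $\Omega^i_j$ never enter, since working at the level of the vanishing connection form makes $\iota^*d\w^2_3=d(\iota^*\w^2_3)=0$ automatic and bypasses any Gauss-equation computation.
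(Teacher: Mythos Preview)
Your argument is correct and uses the same core ingredients as the paper's proof: both deduce $\mu=0$, invoke the K\"ahler relation $\w^2_3=\w^4_1$, and use $f^*\w^4_1=\nu\,f^*\w^1$ from \eqref{pullsat}. The difference is only in the final step. The paper differentiates: it argues that since $f^*\w^1$ is integrable, so is $f^*\w^4_1$, hence $d\w^2_3\equiv 0\bmod\w^1$, and then reads off $K=0$ from the second structure equation. You instead observe that the connection form $\iota^*\w^2_3=\nu\,\iota^*\w^1$ already vanishes identically on each leaf, giving a parallel orthonormal coframe and hence flatness without ever differentiating. This is a mild streamlining---it uses integrability only once (to produce the leaf $L$) rather than twice---and it yields the slightly stronger conclusion that $(\w^2,\w^3)$ is globally parallel on $L$, which as you note dovetails with the homogeneity assertion in Theorem~\ref{constructor}.
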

\begin{proof} Let $(W,X,Y)$ be a standard (local) frame on $M$, and let $f: M \to \F$ be the
associated unitary frame, as defined by \eqref{WeXY}.  With respect to the $(W,X,Y)$ basis,
the shape operator has the form \eqref{shapematrix} with $\mu=0$.  So,
$f^* \w^4_1 = \nu f^*\w^1$.  By hypothesis, $f^*\w^1$ is integrable (i.e., its exterior derivative
is zero modulo itself), so the same is true of $f^*\w^4_1$.  Because
$\w^2$ and $\w^3$ restrict to be an orthonormal coframe along an $\H$-leaf, we compute the
Gauss curvature $K$ using the equation
$$d\w^2_3 \equiv K \w^2 \wedge \w^3 \quad\mod \ \w^1.$$
(All forms here are understood to be pulled back via $f$.)  Since $d\w^2_3 = -d\w^4_1 \equiv 0$ modulo $\w^1$,
then $K=0$.
\end{proof}

\begin{remark}
It is easy to see that the 2-Hopf condition on $M^3$ requires that $\nabla_W X = \lambda Y$.  In fact, for a hypersurface such that $\H$ has rank 2, this is equivalent to integrability of $\H$.
To explain this, note that $\nabla_X W = \varphi A X=  \lambda Y$.  Then
$$
\langle \nabla_W X, W\rangle = - \langle X, \nabla_W W\rangle\\
= - \langle X, \varphi A W\rangle \\
= 0.
$$
Then $[X, W] = \nabla_X W - \nabla_W X = \lambda Y - \langle\nabla_W X, Y\rangle Y$.  For integrability,
$\langle [X, W], Y\rangle$ must be zero.  In addition, integrability implies that $\H$-components of
 $\nabla_W X$ and $\nabla_W W$ (as well as those of $\nabla_X X$ and $\nabla_X W$) vanish,
 which explains why the curvature tensor of each leaf must vanish.
\end{remark}

\bigskip

We now turn to the more specialized hypersurfaces of Theorem \ref{constructor}, which can
be characterized as follows:

\begin{prop}\label{charp} Let $M^3$ be as in Proposition \ref{flat}.  If
$\alpha =\langle AW, W\rangle$ is constant along the $\H$-leaves, then all the other
components of the shape operator (with respect to a standard basis) are also constant along these leaves,
and satisfy the differential equations \eqref{odesys}.
\end{prop}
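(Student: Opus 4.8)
The plan is to run the moving-frames calculation of Section~\ref{edssect} directly on $M$, but without imposing the algebraic relation \eqref{nucond}, reading off the conclusions from the Codazzi and Gauss equations together with the integrability of $\H$ and the hypothesis on $\alpha$. First I would pull the coframe back to $M$ along the section $f$ of \eqref{WeXY}; by \eqref{pullsat} with $\mu=0$ this gives $\w^4=0$, $\w^4_1=\nu\,\w^1$, $\w^4_2=\lambda\,\w^2+\beta\,\w^3$, $\w^4_3=\beta\,\w^2+\alpha\,\w^3$, while the K\"ahler relations give $\w^3_1=\w^4_2$ and $\w^3_2=-\w^4_1$. Since $\w^1$ annihilates $\H$ and $\H$ is integrable, computing $d\w^1$ forces the $\w^3$-coefficient of the remaining connection form to equal $\lambda$, so $\w^2_1=a\,\w^1+b\,\w^2+\lambda\,\w^3$ for some functions $a,b$ on $M$ (this is the identity $\nabla_W X=\lambda Y$ of the Remark following Proposition~\ref{flat}). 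Using the frame dual to $(\w^1,\w^2,\w^3)$ I write $dg=-(Yg)\,\w^1+(Xg)\,\w^2+(Wg)\,\w^3$, so that ``constant along the leaves'' means $Xg=Wg=0$.

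Next I would expand the three Codazzi equations $d\w^4_i=-\w^4_k\wedge\w^k_i+\Omega^4_i$, using the curvature 2-forms recorded in Section~\ref{edssect}, and match coefficients against the direct expansions of $d(\nu\,\w^1)$, $d(\lambda\,\w^2+\beta\,\w^3)$ and $d(\beta\,\w^2+\alpha\,\w^3)$. The $\w^2\wedge\w^3$ terms pin down $b=(\lambda^2+\beta^2-\alpha\lambda-c)/\beta$ and produce the two ``cross'' relations $W\beta=X\alpha$ and $W\lambda=X\beta$, while the $\w^1\wedge\w^2$ and $\w^1\wedge\w^3$ terms give $X\nu=(\nu-\lambda)a$, $W\nu=-\beta a$, together with expressions for $Y\alpha$, $Y\beta$, $Y\lambda$ that coincide with \eqref{odesys} once the value of $b$ is substituted and simplified (the two formulas these equations yield for $Y\beta$ agreeing is a convenient check). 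The hypothesis $X\alpha=W\alpha=0$ then immediately gives $W\beta=0$ and the desired ODE system; the entire remaining burden is to prove $a\equiv0$, for that single fact forces $X\nu=W\nu=0$, $X\beta=W\lambda=0$ and $X\lambda=0$, hence leaf-constancy of every component.

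The hard part will be exactly this vanishing of $a$, since the Codazzi equations leave $a$ (equivalently the leaf-variation of $\nu$) entirely free, and two further inputs are needed. First, differentiating the hypothesis: since $d\alpha=-(Y\alpha)\,\w^1$ with $Y\alpha=\beta(\alpha+\lambda-3\nu)$ a known function, the identity $0=d(d\alpha)=-d(Y\alpha)\wedge\w^1-(Y\alpha)\,d\w^1$ (with $d\w^1=a\,\w^1\wedge\w^2$) splits into two scalar equations, giving $X\beta=W\lambda=-3\beta a$ and $X\lambda=(4\alpha+\lambda-9\nu)a$. Second, expanding the Gauss equation $d\w^2_1=-\w^2_k\wedge\w^k_1+\Omega^2_1$ in the same way; its $\w^2\wedge\w^3$-component reads $X\lambda=Wb=\tfrac{2\lambda-\alpha}{\beta}\,W\lambda=(3\alpha-6\lambda)a$. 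Comparing the two values of $X\lambda$ yields $(\alpha+7\lambda-9\nu)\,a=0$, so on any open set where $a\neq0$ one would have $\alpha+7\lambda-9\nu\equiv0$; applying $W$ to this and using $W\alpha=0$, $W\lambda=-3\beta a$, $W\nu=-\beta a$ produces $-12\beta a=0$, contradicting $\beta,a\neq0$. Thus $a\equiv0$ throughout, all remaining leaf-derivatives vanish, and the components satisfy \eqref{odesys}, as claimed.
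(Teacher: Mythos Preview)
Your argument is correct. Both your proof and the paper's hinge on showing that the single undetermined connection coefficient (your $a$, the paper's $\rho$) in $\omega^2_1 = a\,\omega^1 + b\,\omega^2 + \lambda\,\omega^3$ must vanish, after which constancy along leaves and the system \eqref{odesys} drop out immediately. The difference is one of packaging and of endgame. The paper works on the augmented space $\F\times\W$ with the Pfaffian system $\I$, applies Cartan's Lemma to the generator $2$-forms $d\theta_2,d\theta_3,d\theta_4$ to parametrize the $\pi_i$, uses $d\pi_4\wedge\omega^2$ (the analogue of your $d^2\alpha=0$) to obtain $r=-3\beta\rho$, and then differentiates the vanishing $1$-form $\pi_2+3\rho(\beta\omega^3+(2\lambda-\alpha)\omega^2)$ once more to force $\rho=0$. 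You instead work directly on $M$ with the scalar Codazzi and Gauss equations: $d^2\alpha=0$ yields $X\lambda=(4\alpha+\lambda-9\nu)a$, the $\omega^2\wedge\omega^3$-component of $d\omega^2_1$ yields $X\lambda=Wb=(3\alpha-6\lambda)a$, and comparing these followed by one $W$-derivative gives the contradiction. Your route avoids the EDS formalism and Cartan's Lemma entirely, which makes it more elementary; the paper's approach, on the other hand, sets up the machinery that is reused verbatim in the proof of Theorem~\ref{constructor}, so its investment in the Pfaffian framework pays off later.
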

\begin{proof}
As in section \ref{edssect}, we will set up a Pfaffian exterior differential system whose solutions are
framed hypersurfaces of the type under consideration.
Here, the system will encode the conditions that $\H$ is rank 2 and integrable.  If $f:M \to \F$ is a unitary
frame derived from a standard basis as in \eqref{WeXY}, then the pullbacks of the 1-forms on $\F$ satisfy
\eqref{pullsat} with $\mu=0$.  So, we define 1-forms
\begin{align*}
\theta_0 &:= \w^4\\
\theta_1 &:= \w^4_1 - \nu \w^1\\
\theta_2 &:= \w^4_2 - \lambda \w^2 - \beta \w^3\\
\theta_3 &:= \w^4_3 - \beta \w^2 - \alpha \w^3,
\end{align*}
with $\alpha, \beta,\lambda,\nu$ as extra variables.  

Since $\H$ is annihilated by the pullback of $\w^1$,   it  is integrable 
if and only if $d\w^1$ is a multiple of $\w^1$. 
We compute
\begin{equation}\label{dwi}
d\w^1 \equiv (\w^2_1-\lambda\w^3) \wedge \w^2 \quad \mod \theta_0, \theta_1, \theta_2, \theta_3.
\end{equation}
Thus, $\H$ is integrable 
if and only if $\w^2_1 - \lambda \w^3$ equals some linear combination of $\w^1$ and $\w^2$. 
This is equivalent to the vanishing of 
$$\theta_4:=\w^2_1 - \rho \w^1 -\delta\w^2 -\lambda \w^3,$$
for some functions  $\delta$ and $\rho$ along $M$.   (As with the shape operator components, we 
will introduce $\delta$ and $\rho$  as new variables.)  
We compute
$$d\theta_1 \equiv (\beta\delta - \beta^2-\lambda^2 + \alpha\lambda +c) \w^2 \wedge \w^3 \quad \mod \theta_0, \ldots, \theta_4, \w^1,$$
and thus we must have
\begin{equation}\label{qval}
\delta = \dfrac{\beta^2+\lambda^2 - \alpha\lambda -c}{\beta}
\end{equation}
in order to satisfy the independence condition.

Accordingly, let $\W \subset \R^5$ be the open set with coordinates $\alpha, \beta,\lambda, \nu$ and $\rho$, with
$\beta\ne 0$, and let $\I$ be the Pfaffian system  on $\F\times \W$  generated by $\theta_0, \ldots, \theta_4$ (with $\delta$ given by \eqref{qval}).  Differentiating these 1-forms modulo their span gives the generator 2-forms
\begin{equation}\label{h2tableau}
\left.
\begin{aligned}
-d\theta_1 &\equiv \pi_1 \wedge \w^1, \\
-d\theta_2 &\equiv \pi_2 \wedge \w^2 + \pi_3 \wedge \w^3,\\
-d\theta_3 &\equiv \pi_3 \wedge \w^2 + \pi_4 \wedge \w^3,\\
-d\theta_4 &\equiv \pi_5 \wedge \w^1+\left( \dfrac{2\lambda-\alpha}\beta \pi_2 + \dfrac{\beta^2 -\lambda^2 +\alpha \lambda+c}{\beta^2}\pi_3 - \dfrac{\lambda}\beta \pi_4\right) \wedge \w^2
+\pi_2 \wedge \w^3.
\end{aligned}
\right\}
\mod \theta_0, \ldots, \theta_4,
\end{equation}
where
\begin{align*}
\pi_1 &= d\nu + \rho(\lambda-\nu)\w^2 + \beta\rho\, \w^3,\\
\pi_2 &= d\lambda +\left( \dfrac{ (\lambda-\nu)(\lambda^2-\alpha\lambda-c)}{\beta} + \beta(2\lambda+\nu)\right)\w^1,\\
\pi_3 &= d\beta + (\beta^2 +\lambda^2 + \nu(\alpha -2\lambda) + c)\w^1, \\
\pi_4 &= d\alpha + \beta(\alpha+\lambda-3\nu)\w^1,\\
\pi_5 &= d\rho - \rho^2\, \w^2.
\end{align*}

\begin{remark}
Integral submanifolds of $\I$ are in 1-to-1 correspondence with 2-Hopf hypersurfaces in $\X$, so it is of interest
to know how large the set of such surfaces is.  The system $\I$ is not involutive.  However,
it is easy to see from the 2-form generators \eqref{h2tableau} that $\pi_5 \wedge \w^1$ must vanish along
any integral manifold satisfying the independence condition.
When this 2-form is adjoined, the resulting ideal is involutive,
and Cartan's Test indicates that solutions depend on five functions of one variable.
For example, given any curve $\gamma$ in $\X$ and a 2-plane field $E$ along $\gamma$
which is transverse to the $J$-invariant subspace containing $T_p\gamma$ for every $p \in \gamma$,
there is a 2-Hopf hypersurface containing $\gamma$ with $\H = E$ along $\gamma$.
\end{remark}

The set of 2-Hopf hypersurfaces $M$ satisfying the additional hypothesis that $\alpha$ is constant
along $\H$-leaves is considerably smaller.
For, let $\Sigma$ be the integral manifold of $\I$ corresponding to such a hypersurface.
Because $\H$ is annihilated by the pullbacks to $M$ of  $\w^2$ and $\w^3$, then
the constancy of $\alpha$ along the $\H$-leaves implies that $\pi_4$ must be
a multiple of $\w^1$ along $\Sigma$.  On the other hand, the
vanishing of the 2-form $d\theta_3$ implies, by the Cartan Lemma, that
$\pi_4$ must be a linear combination of $\w^2$ and $\w^3$.  Thus, $\pi_4=0$ and $\pi_3 \wedge \w^2=0$ along $\Sigma$.

Substituting these into $d\theta_4$ and applying the Cartan Lemma implies
that $\pi_5$ and $\pi_2$ must be linear combinations of $\w^1$ and $\w^3 + ((2\lambda-\alpha)/\beta)\,\w^2$ along $\Sigma$.
When we compare this with result of the Cartan Lemma applied to the vanishing of $d\theta_2$, we see that there are scalars
$r,t$ such that
$$\pi_2 = r(\w^3 + ((2\lambda-\alpha)/\beta)\w^2), \quad
\pi_3 = r \w^2, \quad \pi_5 = t \w^1$$
along $\Sigma$.

Because $\pi_4$ vanishes along $\Sigma$, the same is true of its exterior derivative.  We compute
$$d\pi_4 \wedge \w^2 \equiv \beta(-3\pi_1+\pi_2 +\pi_4 +3\beta\rho\, \w^3) \wedge \w^1 \wedge \w^2 +(\alpha+\lambda-3\nu)\pi_3 \wedge \w^1 \wedge \w^2
\mod \theta_0, \ldots, \theta_4.$$
From the top line of \eqref{h2tableau}, we see that $\pi_1 \wedge \w^1=0$ along $\Sigma$; substituting
this and the values for $\pi_2, \pi_3, \pi_5$ along $\Sigma$ into the above 3-form, we conclude
that $r=-3\beta\rho$.  Hence, the 1-form
$$\pi_2 +3\rho (\beta\w^3 + (2\lambda-\alpha)\w^2)$$
must vanish along $\Sigma$.  Taking an exterior derivative of this form modulo $\theta_0, \ldots, \theta_4$,
wedging with $\w^1$,
and substituting the known values for the $\pi$'s, we conclude that $\rho=0$ identically.

Because $\pi_2$ and $\pi_3$ must vanish along $\Sigma$, and $\pi_1$ must be a multiple of $\w^1$, we respectively conclude
that $\lambda, \beta$ and $\nu$  are constant along the $\H$-leaves.  Comparing the form of $\pi_2$
and $\pi_3$ with \eqref{odesys}, we can verify that $\alpha, \beta,\lambda$ satisfy the correct differential equations.
\end{proof}

\begin{proof}[Proof of Theorem \ref{constructor}]
Given a solution $\alpha(s), \beta(s),\lambda(s),\nu(s)$ of the system \eqref{odesys} defined on interval $I$, we define a Pfaffian system
$\J$ on $I \times \F$ which is generated (in part) by
substituting each solution component for the corresponding variable in the generator 1-forms of
the system $\I$ used in the proof of Proposition \ref{charp}
(with the values for $\delta$ and $\rho$ deduced there):
$$
\begin{aligned}
\vartheta_0 &:= \w^4\\
\vartheta_1 &:= \w^4_1 - \nu(s) \w^1\\
\vartheta_2 &:= \w^4_2 - \lambda(s) \w^2 - \beta(s) \w^3\\
\vartheta_3 &:= \w^4_3 - \beta(s) \w^2 - \alpha(s) \w^3,\\
\vartheta_4 &:= \w^2_1 - \delta(s)\w^2 - \lambda(s)\w^3,\\
\end{aligned}\qquad \delta(s):= \dfrac{\beta(s)^2+\lambda(s)^2 - \alpha(s)\lambda(s) -c}{\beta(s)}.
$$
To these, we add one more generator 1-form
$$\vartheta_5 := \w^1 + ds,$$
the vanishing of which ensures that $d/ds$ is the $Y$-derivative.

Our system $\J$ is a Frobenius system, so again we might invoke the Frobenius theorem (see, e.g., \cite{Warner}) to obtain
a unique connected maximal integral 3-fold of $\J$ through any point.  Although the image of this under the
fibration $\F \to M$ would be one of the desired hypersurfaces, we would not get any information about
whether the hypersurface is complete or compact.  In what follows, we give a more explicit construction.

Recall that $\X = Q/S^1$, where $Q \subset \C^3$ is the sphere $S^5(r)$ or anti-de Sitter space $H^5_1(r)$
defined by
$$\langle \bz, \bz \rangle = \epsilon r^2,$$
and $\langle\ , \rangle$ is the hermitian inner product on $\C^3$ with signature $+++$ (when
we take $\epsilon=1$ and get $\X = \CP^2$), or $-++$ (when we take $\epsilon=-1$ and get $\X = \CH^2$).
The $S^1$ action multiplies the coordinates by a unit modulus scalar, so that the
quotient map $\pi:Q \to \X$ is just the restriction of complex projectivization.  The metric
on the orthogonal complement to the fibers of $\pi$ descends to the quotient $\X$,
and has constant holomorphic sectional curvature $4c$ where $c=\epsilon/r^2$.
For more detail, see \cite{nrsurvey}, pp. 235--237.

Recall from
\cite{dalembert} that we may lift the $S^1$-quotient to a quotient map $\Pi:G \to \F$, where
$G$ is the group of matrices that are unitary with respect to the inner product
(i.e., $G=U(3)$ for $\epsilon=1$ and $G=U(1,2)$ for $\epsilon=-1$).
In more detail, we define the submersion $\Pi:G\to \F$ as follows:
given an element $g\in G$ with columns $(E_0, E_1, E_2)$, $\Pi$ takes
$g$ to the unitary frame
$$e_1 = \pi_* E_1, \quad e_2 = \pi_* \ri E_1, \quad e_3 =\pi_* E_2, \quad e_4=\pi_* \ri E_2$$
at basepoint $\pi(E_0)$.
In fact, we can identify $G$ itself with the unitary frame bundle of $Q$, using $\bz = r E_0$ as the
basepoint map and $E_1, E_2$ generating the frame as above.
Then the $S^1$ action on $G$ that is equivariant with respect to this basepoint map
is simply left-multiplication by elements of the 1-parameter subgroup $\exp(\ri t I)$.

We will construct solutions to the system $\J$ by pulling the system back via $\Pi$ to $I\times G$,
constructing solutions there, and projecting back down.  For this purpose we need to express the
pullbacks of 1-forms $\w^i$ and $\w^i_j$ in terms of the Maurer-Cartan forms of $G$.  The latter are
complex-valued 1-forms $\psi^a_b$ defined by
$$d E_a = E_b \psi^b_a, \qquad 0 \le a,b \le 2,$$
where we regard the columns $E_a$ as vector-valued functions on $G$.
These columns satisfy
$$\langle E_0, E_0 \rangle = \epsilon,\quad \langle E_0, E_i\rangle=0,\quad \langle E_i, E_j \rangle = \delta_{ij},
\qquad 1 \le i,j \le 2,$$
so we have
$$\psi^b_a = \begin{cases}
-\epsilon\overline{\psi^a_b} & \text{if exactly one of $a,b$ is equal to zero,}\\
-\overline{\psi^a_b} & \text{otherwise.}
\end{cases}$$
Then, using formulas developed in \cite{dalembert}, we have
\begin{equation}\label{pully}
\begin{aligned}
\Pi^*(\w^1 + \ri \w^2) &= \psi^1_0, & \quad \Pi^*(\ri \w^2_1) &=\psi^1_1 - \psi^0_0,\\
\Pi^*(\w^3 + \ri \w^4) &= \psi^2_0, & \Pi^*(\ri \w^4_3) &= \psi^2_2 - \psi^0_0, \\
\Pi^*(\w^3_1 + \ri\w^4_1) &= \psi^2_1. & \\
\end{aligned}
\end{equation}
For the sake of brevity, we will write $\eta_1 :=\Re\psi^1_0$, $\eta_2 :=\Im\psi^1_0$ and $\eta_3 = \Re\psi^2_0$ in what follows.

Let $\Jhat$ be the Frobenius system on $I\times G$ generated by the pullbacks of the 1-forms of $\J$.   Using \eqref{pully},
we see that $\Jhat$ is generated by
\begin{align*}
\Pi^*\vartheta_0 &= \Im \psi^2_0,\\
\Pi^*\vartheta_1 &= \Im \psi^2_1 - \nu(s)\eta_1,\\
\Pi^*\vartheta_2 &= \Re\psi^2_1 - \lambda(s)\eta_2-\beta(s)\eta_3,\\
\Pi^*\vartheta_3 &= \Im(\psi^2_2-\psi^0_0) - \beta(s)\eta_2 - \alpha(s)\eta_3,\\
\Pi^*\vartheta_4 &= \Im(\psi^1_1-\psi^0_0) - \delta(s)\eta_2 - \lambda(s)\eta_3,\\
\Pi^*\vartheta_5 &= \eta_1 + ds.
\end{align*}
Since this system is of rank 6 on the 10-dimensional manifold $I\times G$, its
maximal integral manifolds are 4-dimensional; in particular, they are foliated by orbits of the $S^1$-action.
We specify 3-dimensional slices to this action by adding the extra 1-form
$\Im(\psi^0_0 + \psi^1_1 +\psi^2_2)$, whose exterior derivative is zero modulo the 1-forms of $\Jhat$.  (In fact, this 1-form
is closed on $G$, and its integral manifolds are the cosets of the subgroup of special unitary matrices.)  Let
$\Khat$ denote the resulting rank 7 Frobenius system.

We will obtain integral 3-manifolds of $\Khat$ as follows.
We will first construct a curve which is an integral of $\Khat$ but along which, in addition, $\eta_2=\eta_3=0$.
Then we will take a union of integral surfaces of $\Khat$ transverse to the curve.  (The Frobenius condition
guarantees that this union is an integral 3-fold of $\Khat$.)
Fix a value $s_0 \in I$ and a point $u_0\in G$; then
the curve takes the form $(s,U(s))$ where $U(s)$ is a $G$-valued matrix satisfying the initial value problem
$$\dfrac{dU}{ds} = -U \begin{pmatrix} 0 & \epsilon & 0\\ 1 & 0 & \ri \nu(s)\\ 0 & \ri \nu(s) & 0\end{pmatrix}, \qquad U(s_0) = u_0.$$
This is a system of linear ODE, so its solution is smooth and defined for all $s\in I$.

Next, for each fixed $s\in I$ we construct an integral surface of $\Khat$ passing through $U(s)$.  Along such a surface,
$\eta_2$ and $\eta_3$ are closed, so there are local coordinates $x,w$ such that $\eta_2 = dx$ and $\eta_3=dw$.  Then the surface
must be given by a $G$-valued function $V$ satisfying the simultaneous initial value problems
$$\dfrac{\partial V}{\partial x} = V A(s), \qquad \dfrac{\partial V}{\partial w} = V B(s), \qquad V(0,0) =  U(s),$$
where
\begin{align*} A(s) &= \begin{pmatrix} -\tfrac{\ri}3(\beta+\delta)& \epsilon\ri & 0 \\ \ri & \tfrac{\ri}3(2\delta(s)-\beta(s)) & -\lambda(s)\\
0 & \lambda(s) & \tfrac{\ri}3(2\beta(s)- \delta(s))\end{pmatrix}, \\
 B(s) &= \begin{pmatrix} -\tfrac{\ri}3(\alpha+\lambda)& 0 & -\epsilon \\ 0 & \tfrac{\ri}3(2\lambda(s)-\alpha(s)) & -\beta(s)\\
1 & \beta(s) & \tfrac{\ri}3(2\alpha(s)- \lambda(s))\end{pmatrix}.
\end{align*}
It is easy to check that $A$ and $B$ satisfy the solvability condition $[A(s),B(s)]=0$.
Thus,  $V$ is defined for all $x,w \in \R^2$; in fact the solution is given by
$$V = U(s) \exp( x A(s) + w B(s)),$$
and thus for a fixed $s$, $V$ sweeps out a left coset of a 2-dimensional abelian subgroup of $G$.
By varying $s$, we obtain a smooth matrix-valued function $V(s,x,w)$ such that
$(s,V(s,x,w))$ is an integral 3-fold of $\Khat$.  Then we set $\Phi(s,x,w) = b\circ \Pi( V(s,x,w))$, where
$b:\F \to \X$ is the basepoint map, to obtain the desired immersion.  Moreover, for fixed $s$ each
leaf $\Phi(s,x,w)$ is the orbit of a 2-dimensional abelian subgroup of $G$, acting by isometries on $\X$.
\end{proof}

\end{document}